\date{}
\theoremstyle{definition}
\newtheorem{theorem}{Theorem}
\numberwithin{theorem}{section}
\newtheorem{proposition}[theorem]{Proposition}
\newtheorem{corollary}[theorem]{Corollary}
\newtheorem{definition}[theorem]{Definition}
\newtheorem{remark}[theorem]{Remark}
\newtheorem{example}[theorem]{Example}
\definecolor{g4}{rgb}{0,0.4,0}
\newcommand{\trace}[1]{\mbox{trace}{#1}}
\title{\bf Duality of Graphical Models \\ and Tensor Networks}
\author{Elina Robeva and Anna Seigal}
\date{}
\begin{document}
\maketitle

\begin{abstract} \noindent 
In this article we show the duality between tensor networks and undirected graphical models with discrete variables. We study tensor networks on hypergraphs, which we call tensor hypernetworks.
We show that the tensor hypernetwork on a hypergraph exactly corresponds to the graphical model given by the dual hypergraph.
We translate various notions under duality. For example, marginalization in a graphical model is dual to contraction in the tensor network. Algorithms also translate under duality. We show that belief propagation corresponds to a known algorithm for tensor network contraction. This article is a reminder that the research areas of graphical models and tensor networks can benefit from interaction.
\end{abstract}

\section{Introduction}

Graphical models and tensor networks are very popular but mostly separate fields of study. Graphical models are used in artificial intelligence, machine learning, and statistical mechanics \cite{WJ}. 
Tensor networks show up in areas such as quantum information theory and partial differential equations \cite{Hack,JM}. 
 
Tensor network states are tensors which factor according to the adjacency structure of the vertices of a graph. On the other hand, graphical models are probability distributions which factor according to the clique structure of a graph. The joint probability distribution of several discrete random variables is naturally organized into a tensor. Hence both graphical models and tensor networks are ways to represent families of tensors that factorize according to a graph structure.

The relationship between particular graphical models and particular tensor networks has been studied in the past. For example in \cite{CM} the authors reparametrize a hidden markov model to make a matrix product state tensor network. In \cite{CCXXW}, a map is constructed that sends a restricted Boltzmann machine graphical model to a matrix product state. In \cite{P}, an example of a directed graphical model is given with a related tensor network on the same graph, to highlight computational advantages of the graphical model in that setting.

From the outset, there are differences in the graphical description. On the graphical models side, the factors in the decomposition correspond to cliques in the graph. On the tensor networks side, the factors are associated to the vertices of the graph.

In this article, we show a duality correspondence between graphical models and tensor networks. This correspondence applies to all graphical models and all tensor networks and does not require reparametrization of either. Our mathematical relationship stems from hypergraph duality. We begin by recalling the definition of a hypergraph.

\begin{definition}
A hypergraph $H = (U,\mathcal{C})$ consists of a set of vertices $U$, and a set of hyperedges $\mathcal C$. A hyperedge $C\in\mathcal C$ is any subset of the vertices.
\end{definition}

There are two ways to construct a hypergraph from a matrix $M$ of size $d\times c$ with entries in $\{0,1\}$. First, we let the rows index the vertices and the columns index the hyperedges. The non-vanishing entries in each column give the vertices that appear in that hyperedge,
$$ M_{uC} = \begin{cases} 1 & u \in C \\ 0 & \text{otherwise.} \end{cases}$$
In this case $M$ is the {\em incidence matrix} of the hypergraph. We allow nested or repeated hyperedges, as well as edges containing one or no vertices, so there are no restrictions on $M$. Alternatively, we can construct a hypergraph with incidence matrix $M^T$. This is the {\em dual hypergraph} to the one with incidence matrix $M$, see \cite[Section 1.1]{B}.

We now add extra data to the matrix. We attach positive integers $n_1, \ldots, n_d$ to each row. We assign tensors to each column of $M$ whose size is the product of the $n_i$ as $i$ ranges over the non-vanishing entries in the column. For example, the tensor associated to the column $(1, 1, 0, 1, 0, \ldots, 0)^T$ would have size $n_1 \times n_2 \times n_4 $. We explain how this defines the data of both a graphical model and of a tensor network. Filling in the entries of the tensors gives a distribution in a graphical model (if we choose entries in $\mathbb{R}_{\geq 0}$), or a tensor network state in a tensor network. We see how a graphical model is visualized by the hypergraph with incidence matrix $M$, while the tensor network is visualized by the hypergraph of $M^T$.

Before stating our duality correspondence, we define graphical models in terms of hypergraphs, and introduce tensor hypernetworks. We keep in mind how the definitions translate to the incidence matrix set-up from above.

\begin{definition} Consider a hypergraph $H=(U, \mathcal C)$ with $U = [d]$. An {\em undirected graphical model} with respect to $H$ is the set of probability distributions on the random variables $\{X_u, u\in U\}$ which factor according to the hyperedges in $\mathcal C$:
$$P(x_1,\dots, x_d) = \frac1Z\prod_{C\in\mathcal C}\psi_C(x_C).$$
Here, the random variable $X_u$ takes values $x_u\in\mathcal X_u$, the subset $x_C$ equals $\{x_u: u\in C\}$, and the function $\psi_{C}$ is a {\em clique potential} with domain $\prod_{u\in C}\mathcal X_u$. The normalizing constant $Z$ ensures the probabilities sum to one.
\end{definition}

When all random variables are discrete, the joint probabilities form a tensor $P$ of size $\times_{u \in U} | \mathcal{X}_u |$ and the clique potentials are tensors of size $\times_{u \in C} | \mathcal{X}_u |$, all with entries in $\mathbb{R}_{\geq 0}$. The graphical model is depicted as the hypergraph whose incidence matrix has rows represented by the random variables $\{ X_u : 1 \leq u \leq d \}$ and columns indexed by the hyperedges.

If we fix the values in the clique potentials, we obtain a particular distribution in the graphical model. We recover the usual depiction of the graphical model by a graph instead of a hypergraph by connecting pairs of vertices by an edge if they lie in the same hyperedge. 

\begin{remark}
Graphical models are sometimes required to factorize according to the {\em maximal} cliques of a graph. We see later how our set-up specializes to this case. Models with cliques that are not necessarily maximal can be called {\em hierarchical models} \cite{Seth}.
\end{remark}

We now define tensor hypernetworks: families of tensors that factor according to a hypergraph. They have been defined this way in the literature
though it is not common ~\cite{BDR, BCM}.

\begin{definition}
Consider a hypergraph $G = (V,E)$. To each hyperedge $e\in E$ we associate a positive integer $n_e$, called the size of the hyperedge. To  each vertex $v \in V$ we assign a tensor $T_v\in \bigotimes_{e\ni v}\mathbb K^{n_e}$, where $\mathbb K$ is usually $\mathbb R$ or $\mathbb C$. The {\em tensor hypernetwork state} is obtained from $\bigotimes_{v \in V} T_v$ by contracting indices along all hyperedges in the graph that contain two or more vertices. We call hyperedges containing only one vertex dangling edges.
\end{definition}

Note that as opposed to graphical models, in tensor hypernetworks we assign tensors to the vertices of the graph rather than the hyperedges.

Restricting the definition of a tensor hypernetwork to hyperedges with at most two vertices gives the usual definition of a tensor network. The following example illustrates a widely used tensor network.

\begin{example}[Tucker decomposition] \label{tucker} Consider the graph
\begin{center}
\includegraphics[width=0.2\textwidth]{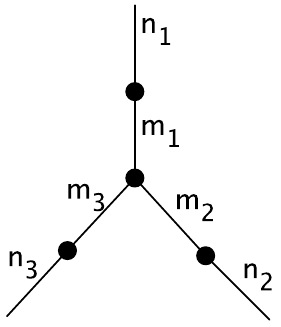}
\end{center}
We have a core tensor $T_0\in\mathbb K^{m_{1}}\otimes\cdots\otimes\mathbb K^{m_{d}}$, and matrices $T_i\in\mathbb K^{n_i}\otimes \mathbb K^{m_i}$. The entries of the tensor $T\in\mathbb K^{n_1}\otimes\cdots\mathbb K^{n_d}$ are
$$T_{i_1,\dots,i_d} = \sum_{j_1,\dots, j_d} (T_0)_{j_1,\dots, j_d}(T_1)_{i_1,j_1}\cdots(T_d)_{i_d, j_d}.$$
For suitable weights $m_i$ and orthogonal matrices $T_j$, this is the {\em Tucker decomposition} of $T$.
\end{example}

An important reason to extend the definition of tensor networks to tensor hypernetworks, other than the duality with graphical models explained in the next section, is that significant classes of tensors naturally arise from tensor hypernetworks.

\begin{example}[Tensor rank (CP rank)] \label{cp}
Consider this hypergraph on vertex set $\{ 1, 2, 3 \}$.
\begin{center}
\includegraphics[width=0.21\textwidth]{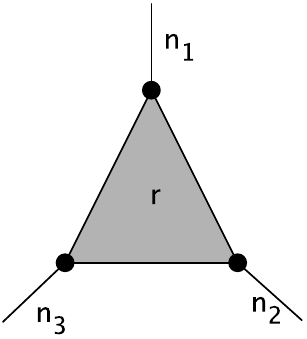}
\end{center}
There is one dangling edge for each vertex, with sizes $n_1, n_2, n_3$. There is one more hyperedge of size $r$, represented by a shaded triangle, that connects all three vertices. The tensors $T_v$ attached to each of the three vertices are matrices of size $n_v \times r$. The tensor hypernetwork state has size $n_1 \times n_2 \times n_3$ with entries 
$$ T_{i j k } = \sum_{l = 1}^{r} (T_1)_{il} (T_2)_{j l} (T_3)_{k l}. $$
The set of tensors given by this tensor hypernetwork equals the set of tensors of rank at most $r$. The same structure on $d$ vertices, with weights $n_1, \ldots, n_d, r$, gives tensors of size $n_1 \times \cdots \times n_d$, and rank at most $r$. Tensor rank is the most direct generalization of matrix rank to the setting of tensors \cite{Hack}. The set of tensors of rank at most r is naturally parametrized by this tensor hypernetwork without requiring special structure on the tensors at the vertices. \end{example}

The rest of the paper is organized as follows. We describe the duality correspondence between graphical models and tensor networks in Section~\ref{sec:duality}. In Section~\ref{properties} we explain how certain structures (graphs, trees, and homotopy types) and operators (marginalization, conditioning, and entropy) translate under the duality map. In Section~\ref{application} we give an algorithmic application of our duality correspondence.

\section{Duality} \label{sec:duality}

In this section we give the duality between graphical models and tensor networks.

\begin{theorem}\label{duality}
A discrete graphical model associated to a hypergraph $H = (U, \mathcal C)$ with clique potentials $\psi_C: \prod_{u\in C}\mathcal X_u \to \mathbb K$ is the same as the data of a tensor hypernetwork associated to its dual hypergraph $H^*$ with tensors $T_C = \psi_C$ at each vertex of $H^*$.
\end{theorem}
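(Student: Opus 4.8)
The plan is to show that the two sides of the claimed equality consist of literally the same data, viewed through two transposed incidence matrices. I would begin by encoding the graphical model as its incidence matrix $M$, with rows indexed by the variables $u \in U$ and columns indexed by the cliques $C \in \mathcal{C}$, so that $M_{uC} = 1$ exactly when $u \in C$. The data of the graphical model then consists of a positive integer $|\mathcal{X}_u|$ attached to each row $u$, together with a clique potential $\psi_C \in \bigotimes_{u \in C} \mathbb{K}^{|\mathcal{X}_u|}$ attached to each column $C$.

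Next I would pass to the dual hypergraph $H^* = (V,E)$ whose incidence matrix is $M^T$. By construction the vertices $V$ of $H^*$ are the columns of $M$, i.e. the cliques $C \in \mathcal{C}$, and the hyperedges $E$ of $H^*$ are the rows of $M$, i.e. the variables $u \in U$. The key observation, and essentially the whole content of the duality, is that the incidence relation is preserved under transposition: a vertex $C$ of $H^*$ lies in a hyperedge $u$ of $H^*$ if and only if $(M^T)_{Cu} = M_{uC} = 1$, that is, if and only if $u \in C$ in the original hypergraph. Thus the set of hyperedges of $H^*$ incident to the vertex $C$ is exactly $\{u : u \in C\}$.

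I would then match the tensor data. Assigning the size $n_u = |\mathcal{X}_u|$ to each hyperedge $u$ of $H^*$, the tensor attached to a vertex $C$ of $H^*$ must, by the definition of a tensor hypernetwork, live in $\bigotimes_{u \ni C} \mathbb{K}^{n_u}$, where the product is over the hyperedges $u$ of $H^*$ incident to $C$. By the previous step this space is $\bigotimes_{u \in C} \mathbb{K}^{|\mathcal{X}_u|}$, which is precisely the space in which the clique potential $\psi_C$ lives. Hence setting $T_C = \psi_C$ is well defined and type-correct, and letting $C$ range over $\mathcal{C}$ gives a bijection between the clique potentials of the graphical model and the vertex tensors of the tensor hypernetwork on $H^*$.

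The argument is a direct unwinding of the two definitions, so I do not anticipate a genuine obstacle; the one point to handle with care is the bookkeeping of index sets, so that the modes of $\psi_C$ (indexed by $u \in C$) are correctly identified with the hyperedges of $H^*$ incident to $C$. I would also state explicitly what the duality does and does not assert: the vertex tensors and the incidence structure coincide on both sides, so the defining data is identical, even though the graphical model multiplies the potentials while the tensor hypernetwork contracts over its non-dangling hyperedges. In particular a variable $u$ belonging to two or more cliques becomes a hyperedge of $H^*$ with two or more vertices, while a variable lying in a single clique becomes a dangling edge; tracking this correspondence is the only subtlety worth flagging.
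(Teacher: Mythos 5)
Your proposal is correct and follows essentially the same route as the paper: encode $H$ by its incidence matrix $M$, note that $H^*$ has incidence matrix $M^T$ so that $u \in C$ if and only if the vertex $v_C$ lies on the hyperedge $e_u$, and conclude that setting $T_C = \psi_C \in \bigotimes_{u \in C} \mathbb{K}^{|\mathcal{X}_u|}$ is type-correct, giving the data bijection. The paper additionally records the identity $P(x_u : u \in U)\cdot Z = \prod_{C\in\mathcal C}\psi_C(x_C) = \prod_{C\in\mathcal C}(T_C)_{x_C}$, identifying the unnormalized joint distribution with the tensor hypernetwork state before contraction, a point you cover verbally in your closing remark about multiplication versus contraction and dangling edges.
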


\begin{proof}
Consider a joint distribution (or tensor) $P$ in the graphical model defined by the hypergraph $H$. As described above, the incidence matrix $M$ of $H$ has rows corresponding to the variables $u \in U$ and columns corresponding to the cliques $C \in \mathcal{C}$. The data of the distribution $P$ also contains a potential function $\psi_C: \prod_{u\in C}\mathcal X_u \to\mathbb K$ for each clique $C\in\mathcal C$, which is equivalently a tensor of size $\times_{u \in C} |\mathcal{X}_u|$.

The dual hypergraph $H^*$ has incidence matrix $M^T$. It is a hypergraph with vertices $\{ v_C : C \in \mathcal{C} \}$ and hyperedges $\{ e_u : u \in U\}$.  By definition of the dual hypergraph, $u \in C$ is equivalent to $v_C \in e_u$. Associating the tensors $T_C = \psi_C\in\bigotimes_{e_u \ni v_C} \mathbb{K}^{|\mathcal X_u|}$ to each vertex $v_C$ of $H^*$ gives a tensor hypernetwork for $H^*$. Moreover, up to scaling by the normalization constant $Z$, the joint probability tensor $P$ is given by
$$P(x_u: u\in U) \cdot Z = \prod_{C\in\mathcal C} \psi_C(x_C) = \prod_{C\in\mathcal C}(T_C)_{x_C}.$$
The last expression is the tensor hypernetwork state before contracting the hyperedges. 
\end{proof}

Note that since $(M^T)^T = M$, the dual of the dual $(H^*)^*$ is equal to $H$. This implies the following one-to-one correspondence. Before we state it, let us denote  the set of distributions on $\mathcal{X} = \prod_{u \in U} \mathcal{X}_u$ that are in the graphical model defined by the hypergraph $H = (U,F)$ by $\mathscr{G}(H, \mathcal{X})$, and the set of non-contracted tensor hypernetwork states from a hypergraph $G = (V,E)$ with weights ${\bf n} = \{ n_e : e \in E\}$ by $\mathscr{T}(G, {\bf n})$.

\begin{corollary}
There is a one-to-one correspondence between the graphical models $\mathscr{G}(H, \mathcal{X} )$ and the tensor hypernetwork states $\mathscr{T}(H^*, \{|\mathcal X_u|:u\in U\})$ up to global scaling constant.
\end{corollary}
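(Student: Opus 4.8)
The plan is to build the correspondence directly out of Theorem~\ref{duality} together with the involutivity observation $(H^*)^* = H$, treating the corollary as the statement that two families of tensors literally coincide once we quotient by positive scalars. First I would fix the identification of data supplied by the theorem: given a distribution $P \in \mathscr{G}(H,\mathcal{X})$ with clique potentials $\{\psi_C\}_{C \in \mathcal{C}}$, the theorem assigns vertex tensors $T_C = \psi_C$ to $H^*$, and the non-contracted tensor hypernetwork state equals $\prod_{C} (T_C)_{x_C} = Z \cdot P$. Thus the assignment $P \mapsto Z \cdot P$ lands in $\mathscr{T}(H^*, \{|\mathcal{X}_u| : u \in U\})$, so every graphical-model distribution is, up to the positive scalar $Z$, a non-contracted tensor hypernetwork state on the dual hypergraph.

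For the reverse direction I would run the theorem with the roles of $H$ and $H^*$ exchanged. Since $(M^T)^T = M$ gives $(H^*)^* = H$, a non-contracted tensor hypernetwork state on $H^*$ with vertex tensors $\{T_C\}$ is the same data as a graphical-model specification on $H$ with clique potentials $\psi_C = T_C$; normalizing by $Z = \sum_{x} \prod_C (T_C)_{x_C}$ produces a point of $\mathscr{G}(H,\mathcal{X})$ whenever the entries are nonnegative. Composing the two assignments gives maps that are mutually inverse after passing to the quotient by positive scaling, which is exactly the ``up to global scaling constant'' in the statement. Because the theorem identifies the underlying data ``on the nose'' ($T_C = \psi_C$), the correspondence is essentially the identity on tensors, and the only content to verify is that normalization and un-normalization are inverse operations on the relevant equivalence classes.

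The step I expect to require the most care is making precise the quotient by the global scaling constant so that the correspondence is genuinely one-to-one rather than one-to-many. On the graphical-models side each distribution is already normalized to sum to one, whereas on the tensor-network side the state is an arbitrary nonzero nonnegative tensor; I would therefore phrase the correspondence as identifying a distribution $P$ with the ray $\{c\,(Z P) : c > 0\}$ of its unnormalized representatives, and check that distinct distributions yield distinct rays and that every nonzero nonnegative state in $\mathscr{T}(H^*, \{|\mathcal{X}_u| : u \in U\})$ has a unique normalized representative in $\mathscr{G}(H,\mathcal{X})$. I would also emphasize that the map is asserted between the sets of \emph{states}, not between the defining potentials, so the familiar non-uniqueness of clique potentials (and of vertex tensors) producing a given state is irrelevant to the injectivity of the correspondence itself.
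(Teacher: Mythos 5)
Your proposal is correct and takes essentially the same route as the paper, which gives no separate proof of the corollary but derives it immediately from Theorem~\ref{duality} together with the observation that $(M^T)^T = M$ implies $(H^*)^* = H$ --- exactly your two-directional argument. Your added care about the quotient by positive scaling and the distinction between states and potentials simply makes explicit what the paper compresses into the phrase ``up to global scaling constant.''
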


Note that while clique potentials are required to take values in $\mathbb{R}_{\geq 0}$ for probabilistic reasons, the definition and factorization structure of graphical models carries over to the case where the entries of these tensors belong to a general field $\mathbb{K}$. 
In the rest of this section we illustrate our results by showing the dual structures to some familiar examples of tensor network states and graphical models. 

\begin{example}[Matrix Product States (MPS)/Tensor Trains] \label{mps}
These are a popular family of tensor networks in quantum physics \cite{O} and numerical applications \cite{Hack} (where the two names come from). 
We return to them in detail in Section~\ref{application}. The MPS network on the left is dual to the graphical model on the right.

\begin{center}
\includegraphics[width=0.2\textwidth]{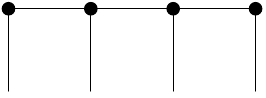}\hspace{1cm}\includegraphics[width=0.2\textwidth]{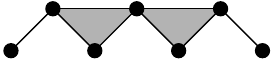}
\end{center}

The top row of edges in the tensor network is contracted. We see later that this corresponds to the top row of variables in the graphical model being hidden.
\end{example}

\begin{example}[No three-way interaction model]
This graphical model consists of all probability distributions that factor as
$ p_{ijk} = A_{ij} B_{ik} C_{jk} $,
for clique potential matrices $A, B, C$. It is represented by a hypergraph
in which all hyperedges have two vertices. The incidence matrix of the hypergraph is
$$
\begin{blockarray}{cccc}
& A & B & C \\
\begin{block}{c(ccc)}
  i & 1 & 1 & 0 \\
  j & 1 & 0 & 1  \\
  k & 0 & 1 & 1 \\
 \end{block}
\end{blockarray}
 $$
This matrix is symmetric. Hence the tensor network corresponding to this graphical model is given by the same triangle graph. We note that, up to dangling edges, this is also the shape of the tensor network of the tensor that represents the matrix multiplication operator~\cite{JM}.
\begin{center}
\includegraphics[width=0.2\textwidth]{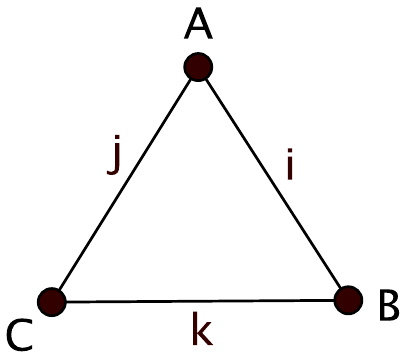}\hspace{1cm}
\includegraphics[width=0.2\textwidth]{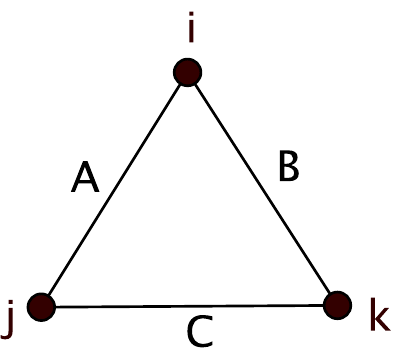}
\end{center}
\end{example}

\begin{example}[The Ising Model] \label{ising}
This graphical model is defined by the cliques of a two-dimensional lattice such as the grid on the right.
Its dual is the hypergraph on the left.
\begin{center}
\includegraphics[width=0.20\textwidth]{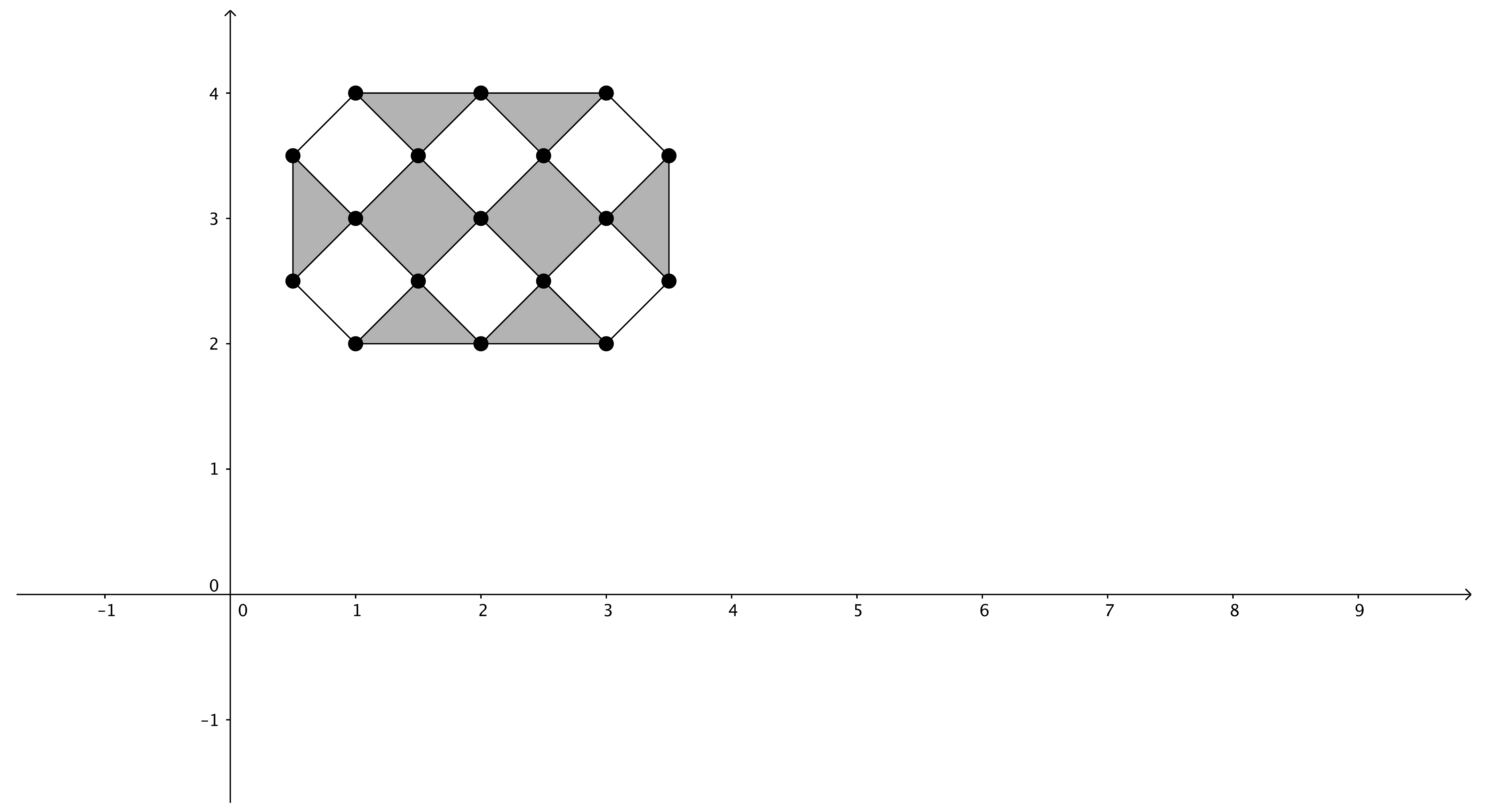}\hspace{1cm}
\includegraphics[width=0.20\textwidth]{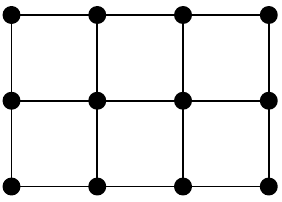}
\end{center}
\end{example}

\begin{example}[Projected Entangled Pair States (PEPS)] This tensor network is a two-dimensional analogue of MPS. It depicts two-dimensional quantum spin systems. Its hypergraph is depicted on the left, with its dual graphical model on the right. Note the structural similarity with Example~\ref{ising}.
\begin{center}
\includegraphics[width=0.20\textwidth]{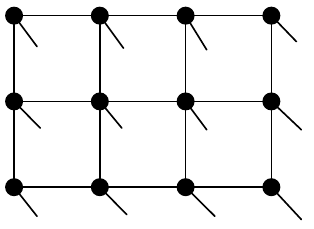}\hspace{1cm}
\includegraphics[width=0.20\textwidth]{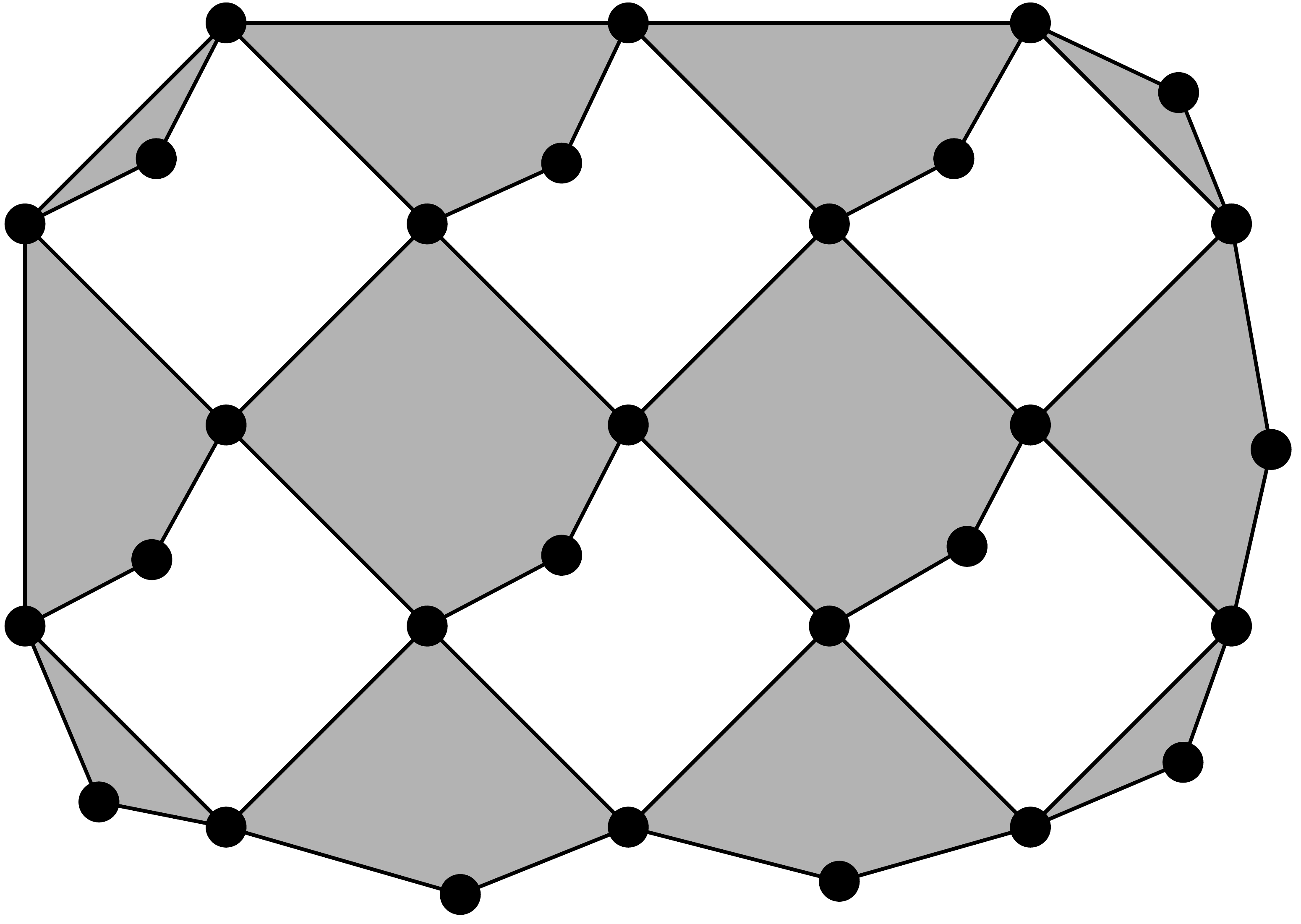}
\end{center}
\end{example}

\begin{example}[The Multi-scale Entanglement Renormalization Ansatz (MERA)]
This tensor network is popular in the quantum community, due to its favorable abilities to represent relevant tensors and compute efficiently with them. It is on the left, with its dual graphical model on the right. 
\begin{center}
\includegraphics[width=0.20\textwidth]{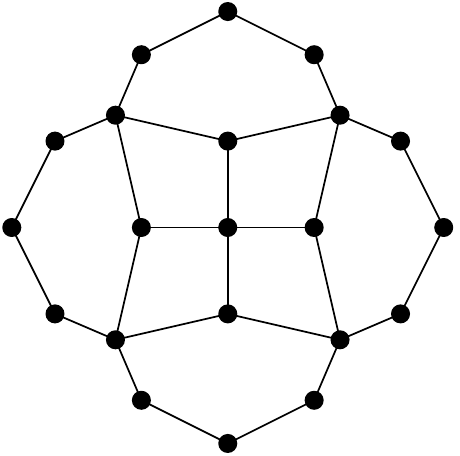}\hspace{1cm}
\includegraphics[width=0.20\textwidth]{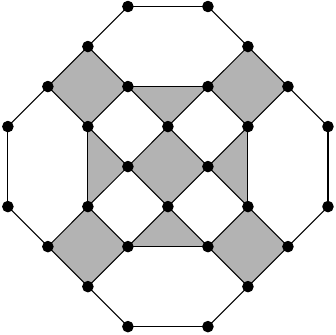}

\end{center}
\end{example}

Finally, we point out the following fun fact.
\begin{remark}[Duality of Tucker and CP decomposition]
Consider the hypergraphs in Examples~\ref{tucker} and \ref{cp} that give the graph structure of Tucker decomposition and CP decomposition respectively. Up to removal of dangling edges, the hypergraph corresponding to CP decomposition is dual to the one for Tucker decomposition.
\end{remark}

\section{Properties} \label{properties}

Tensor networks and graphical models are often given special structure.
For example, one can restrict to tensor networks that use a graph rather than a hypergraph.  In this section we show how properties and operations for graphical models and tensor hypernetworks behave under the duality map.

\subsection{Restricting to graphs}

Graphs are special hypergraphs in which every hyperedge contains two vertices. They are also known as $2$-uniform hypergraphs. Each column of the incidence matrix of such a hypergraph sums to two. Taking the dual of a graph gives a hypergraph in which every vertex has degree two, also known as a $2$-regular hypergraph \cite{B}. We call a hypergraph at-most-2-regular if every vertex has degree at most 2.

\begin{proposition}
Tensor networks are dual to at-most-$2$-regular graphical models. Graph models (graphical models whose cliques are the edges of a graph) are dual to $2$-regular tensor hypernetworks.
\end{proposition}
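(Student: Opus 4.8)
The plan is to prove the two claims of the proposition directly from the dual-hypergraph description established in Theorem~\ref{duality} and its corollary, together with the elementary observation that transposing an incidence matrix swaps the roles of ``column sums'' and ``row sums.'' Recall that a tensor network (as opposed to a tensor hypernetwork) is by definition one whose underlying hypergraph $G=(V,E)$ is a genuine graph, i.e.\ every hyperedge contains exactly two vertices; equivalently every column of the incidence matrix of $G$ sums to two. Dually, a graphical model is \emph{at-most-$2$-regular} when every vertex of its hypergraph has degree at most two, i.e.\ every row of its incidence matrix sums to at most two.

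First I would handle the first statement. Let $G=(V,E)$ be the hypergraph of a tensor network, so that by Theorem~\ref{duality} the dual $G^*$ is the hypergraph of the corresponding graphical model, with incidence matrix the transpose of that of $G$. Since $G$ is a graph, every column of its incidence matrix sums to two; under transposition these columns become the rows of the incidence matrix of $G^*$, so every row of the incidence matrix of $G^*$ sums to two. A row sum in an incidence matrix records the degree of the corresponding vertex, so every vertex of $G^*$ has degree exactly two, hence at most two; that is, $G^*$ is an at-most-$2$-regular hypergraph and the graphical model is at-most-$2$-regular. Conversely, any at-most-$2$-regular graphical model arises this way because, by the corollary, every hypergraph is the dual of its own dual. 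One subtlety worth flagging is the gap between ``exactly two'' and ``at most two'': the dual of an honest graph gives a strictly $2$-regular hypergraph, but the proposition only asserts at-most-$2$-regularity on the graphical-model side, so I would note that dangling edges (hyperedges with a single vertex) on the tensor-network side, or isolated phenomena, account for vertices of degree less than two, and that the broader ``at most'' phrasing is what one obtains once such degenerate edges are permitted as in the paper's conventions.

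For the second statement I would run the identical argument with the roles of rows and columns exchanged. A graph model is a graphical model whose cliques are precisely the edges of a graph, so every hyperedge (clique) contains exactly two vertices, meaning every \emph{column} of the incidence matrix of its hypergraph $H$ sums to two. Passing to the dual $H^*$ transposes the incidence matrix, turning these columns into rows; but in $H^*$ a row records the size of a hyperedge. Hence I would need to track carefully which combinatorial quantity a row versus a column encodes in each picture: in the incidence-matrix convention of the paper, rows index vertices and columns index hyperedges, so a column sum is a hyperedge size and a row sum is a vertex degree. Transposing swaps these, so ``every hyperedge of $H$ has size two'' becomes ``every vertex of $H^*$ has degree two,'' i.e.\ $H^*$ is $2$-regular. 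Since the tensor hypernetwork attached to $H^*$ by Theorem~\ref{duality} lives on exactly this $2$-regular hypergraph, graph models are dual to $2$-regular tensor hypernetworks, and again the converse follows from $(H^*)^*=H$.

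The proof is essentially bookkeeping, so the main obstacle is not a deep step but a definitional one: keeping absolutely straight, at each stage, which of the two dual pictures I am standing in and whether a given row/column sum is being interpreted as a hyperedge size or as a vertex degree. The transpose swaps these interpretations, and it is easy to conflate ``$2$-uniform'' (every hyperedge has two vertices) with ``$2$-regular'' (every vertex lies in two hyperedges); the whole content of the proposition is precisely that duality interchanges these two dual notions. I would therefore organize the write-up around a single clean sentence—transposition sends column sums to row sums and thereby sends $2$-uniformity to $2$-regularity—and then instantiate it twice, once in each direction, citing Theorem~\ref{duality} to transport the tensor/potential data along with the combinatorics.
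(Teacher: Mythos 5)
Your proof is correct and takes essentially the same approach as the paper, which offers no separate proof at all: the proposition there is an immediate consequence of the preceding paragraph, which makes precisely your observation that transposing the incidence matrix exchanges column sums (hyperedge sizes) with row sums (vertex degrees), so $2$-uniformity dualizes to $2$-regularity. One definitional correction: the paper defines a tensor network as a tensor hypernetwork whose hyperedges have \emph{at most} two vertices (dangling edges included), not ``exactly two'' as you first state, and this is what directly produces the ``at-most-$2$-regular'' phrasing --- a point you do flag and resolve correctly yourself.
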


Graphical models defined by the maximal cliques of a graph correspond to hypergraphs in which we introduce a hyperedge for each maximal clique. Their dual tensor hypernetworks have the following property.

\begin{proposition}
Graphical models defined by the maximal cliques of a graph correspond to tensor hypernetworks whose hypergraphs have the property that whenever a set of hyperedges meet pairwise, the intersection of all of them is non-empty.
\end{proposition}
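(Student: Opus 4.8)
The plan is to unwind both sides of the statement through the duality of Theorem~\ref{duality} and to reduce the claim to the classical fact that conformality of a hypergraph is dual to the Helly property (see \cite{B}). First I would fix notation. Let $\Gamma$ be a graph on vertex set $U$, let $H = (U,\mathcal C)$ be the hypergraph whose hyperedges $\mathcal C$ are the maximal cliques of $\Gamma$, and let $G = H^*$ be the dual hypergraph carrying the tensor hypernetwork. By construction the hyperedges of $G$ are the sets $e_u = \{ v_C : u \in C \}$, one for each $u \in U$, with $v_C \in e_u$ exactly when $u \in C$.

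The key step is to translate the two ingredients of the stated property. I would show that $e_u \cap e_{u'} \neq \emptyset$ precisely when $u$ and $u'$ lie in a common maximal clique, i.e.\ exactly when $u$ and $u'$ are adjacent in $\Gamma$; consequently a family $\{ e_u : u \in S \}$ meets pairwise if and only if $S$ is a clique of $\Gamma$. Likewise, the total intersection $\bigcap_{u \in S} e_u$ is nonempty if and only if some maximal clique contains all of $S$. Under these translations the property ``pairwise meeting implies nonempty common intersection'' becomes exactly ``every clique of $\Gamma$ is contained in a maximal clique,'' which is automatic in any finite graph. This yields the forward direction: the dual of a maximal-clique hypergraph always has the property.

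For the converse I would run the same dictionary backwards. Starting from a hypergraph $G$ with the stated property, the dual $G^*$ has one hyperedge $C_v$ for each vertex $v$ of $G$, and its $2$-section is the intersection graph $\Gamma$ whose vertices are the hyperedges of $G$, adjacent when they meet. A clique of $\Gamma$ is a pairwise-meeting family of hyperedges of $G$, so the hypothesis forces it to have a common vertex $v$, hence to be contained in $C_v$; that is, $G^*$ is conformal, so its inclusion-maximal hyperedges are exactly the maximal cliques of $\Gamma$. Thus $G$ is dual to the graphical model on the maximal cliques of $\Gamma$.

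The main obstacle I expect is the bookkeeping in the converse: the sets $C_v$ need not form an antichain, so some may be nested or repeated, and one must argue that passing to the inclusion-maximal ones recovers exactly the maximal cliques of $\Gamma$ without altering the factorization structure. The paper's convention of allowing nested and repeated hyperedges makes this harmless, but it should be stated carefully. It is worth flagging explicitly that the property is the \emph{Helly property} and that the equivalence being proved is the classical conformal--Helly duality for hypergraphs, which keeps the argument conceptual rather than combinatorial.
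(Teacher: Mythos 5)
Your proposal is correct, and its forward half is in substance identical to the paper's proof: the paper takes a pairwise-meeting family of hyperedges, observes that the corresponding vertices of the dual are pairwise adjacent and hence form a clique, extends that clique to a maximal clique $C$, and concludes that every hyperedge in the family contains the vertex $v_C$ --- exactly your dictionary ``pairwise meeting $\Leftrightarrow$ clique, common intersection $\Leftrightarrow$ containment in a maximal clique.'' Where you genuinely diverge is in scope: the paper stops there, proving only that duals of maximal-clique models have the stated property, whereas you also prove a converse, identifying the property as the Helly property and invoking the classical conformal--Helly duality of Berge \cite{B} to show that any hypergraph with the property is (essentially) dual to a maximal-clique model. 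Your converse argument is sound, including the key observation that each $C_v$ is itself a clique of the intersection graph, so maximal cliques of the $2$-section coincide with inclusion-maximal hyperedges of $G^*$; and you are right to flag the nested-hyperedge caveat --- since the paper's duality is exact at the hypergraph level, the strict converse fails (e.g.\ a dual containing both a maximal clique and a nested sub-clique as hyperedges still has the Helly property), so the correspondence in the reverse direction holds only after passing to inclusion-maximal hyperedges, which preserves the family of distributions but not the hypergraph itself. In short: the paper's proof is shorter and matches the one implication it actually needs, while yours pins down precisely in what sense ``correspond'' can be read as a biconditional and connects the proposition to a named classical duality.
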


\begin{proof} Let $E'\subseteq E$ be a set of hyperedges that meet pairwise. Then, for all $e_1, e_2\in E'$, the corresponding vertices $u_{e_1}, u_{e_2}$ in the dual hypergraph (i.e. in the graphical model) are connected by an edge. Thus, the vertices $\{u_e:e\in E'\}$ form a clique in the graphical model, so there exists a maximal clique $C$ in which this clique is contained. Thus, all hyperedges in $E'$ contain the vertex corresponding to $C$.
\end{proof}

\subsection{Trees on each side}

The {\em homotopy type} of a hypergraph is the homotopy type of the simplicial complex whose maximal simplices are the maximal hyperedges. For topological purposes, we associate hypergraphs with their simplicial complexes. We show that the homotopy type of a hypergraph and its dual agree.

\begin{definition}[see \cite{H}] Consider an open cover $\mathcal{V} = \{ V_i : i \in I \}$ of a topological space $X$. The {\em nerve} $N(\mathcal{V})$ of the cover is a simplicial complex with one vertex for each open set. A subset $\{ V_j : j \in J \}$ spans a simplex in the nerve whenever $\cap_{j \in J} V_j \neq \emptyset$. 
\end{definition}

\begin{theorem}[The Nerve Lemma \cite{Bo}]
The homotopy type of a space $X$ equals the homotopy type of the nerve of an open cover of $X$, provided that all intersections $\cap_{j \in J} V_j$ of sets in the open cover are contractible.
\end{theorem}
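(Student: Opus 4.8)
The plan is to interpolate between $X$ and the realization $|N(\mathcal{V})|$ of the nerve through an intermediate \emph{blow-up} space, and to show that this space is homotopy equivalent to each side. Writing $V_\sigma = \bigcap_{i\in\sigma} V_i$ for a finite nonempty $\sigma\subseteq I$ and $|\sigma|\subseteq|N(\mathcal{V})|$ for the closed simplex it spans, I would set
$$ B \;=\; \bigcup_{\sigma:\,V_\sigma\neq\emptyset} \big(V_\sigma\times|\sigma|\big)\ \subseteq\ X\times|N(\mathcal{V})|, $$
with the two coordinate projections $p\colon B\to X$ and $q\colon B\to|N(\mathcal{V})|$. The whole proof then reduces to showing that $p$ and $q$ are both homotopy equivalences, so that $X\simeq B\simeq|N(\mathcal{V})|$.

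For the projection $p$ to $X$, the key observation is that the fiber $p^{-1}(x)$ is the realization of the full simplex on the index set $\{i: x\in V_i\}$, which is nonempty and contractible. First I would use paracompactness of $X$ to choose a partition of unity $\{\rho_i\}$ subordinate to $\mathcal{V}$; the assignment $x\mapsto\big(x,\sum_i \rho_i(x)\,e_i\big)$ is a well-defined section of $p$, because the indices $i$ with $\rho_i(x)>0$ all contain $x$ and hence span a simplex of $N(\mathcal{V})$. A straight-line homotopy inside each fiber then deformation-retracts $B$ onto the image of this section, showing $p$ is a homotopy equivalence. Notably, this half uses paracompactness but \emph{not} the good-cover hypothesis.

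For the projection $q$ to $|N(\mathcal{V})|$, this is where contractibility of the intersections enters. I would organize $B$ and $|N(\mathcal{V})|$ as homotopy colimits of diagrams indexed by the poset $\mathcal{P}$ of nonempty simplices $\sigma$, ordered by inclusion: $B$ is the homotopy colimit of $\sigma\mapsto V_\sigma$, while $|N(\mathcal{V})|$ is the homotopy colimit of the terminal diagram $\sigma\mapsto\mathrm{pt}$, and $q$ is induced by the unique natural transformation between them. The hypothesis that every $V_\sigma$ is contractible says exactly that this natural transformation is an objectwise homotopy equivalence, and the homotopy invariance of homotopy colimits then upgrades it to a homotopy equivalence of total spaces, so $q$ is a homotopy equivalence.

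The main obstacle is precisely this last upgrade, namely gluing the local (objectwise) contractions into a single global homotopy equivalence. If one prefers to avoid the abstract homotopy-colimit machinery, the same conclusion can be reached by induction over the cover using the gluing lemma for homotopy equivalences along cofibrations together with Mayer--Vietoris-type pushouts, with an exhaustion argument to handle an infinite index set $I$; keeping track of which pushouts are honest cofibrations is the delicate bookkeeping. I would also flag two hypotheses that the clean statement suppresses: $X$ should be paracompact so that the partition of unity in the second step exists, and $|N(\mathcal{V})|$ is taken with its usual weak topology.
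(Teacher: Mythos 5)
The paper does not actually prove this statement: it is quoted as a classical theorem with a citation to Borsuk, so there is no internal proof to compare your attempt against. Judged on its own terms, your proposal is a correct outline of the standard modern proof (essentially the argument in Hatcher, Section 4.G, in the tradition of Segal's treatment): the Mayer--Vietoris blowup $B \subseteq X \times |N(\mathcal{V})|$ with its two projections, the partition-of-unity section showing $p \colon B \to X$ is a homotopy equivalence --- and you correctly isolate that this half uses paracompactness but not the good-cover hypothesis --- and the identification of $q \colon B \to |N(\mathcal{V})|$ as induced by an objectwise equivalence of diagrams over the poset of simplices. You also flag, rather than hide, the genuinely delicate step: upgrading the objectwise contractions to a global equivalence. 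Your fallback route does close this gap, because the relevant attachments in the skeletal filtration of $B$ are along $V_\sigma \times \partial|\sigma| \hookrightarrow V_\sigma \times |\sigma|$, which is a cofibration for an arbitrary space $V_\sigma$, so the gluing lemma gives equivalences $B^{(n)} \simeq N^{(n)}$ skeleton by skeleton, and a telescope argument over the cofibrations $B^{(n-1)} \hookrightarrow B^{(n)}$ handles infinite nerves. Two points you should make explicit in a full write-up: first, equip $B$ with the topology of the skeletal colimit rather than the subspace topology of the product (your section and fiberwise straight-line homotopy remain continuous because the partition of unity is locally finite); second, the hypothesis should be read as requiring only the \emph{nonempty finite} intersections to be contractible, which is also the version the paper actually applies to the $\epsilon$-neighborhoods of maximal simplices. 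It is also worth noting that Borsuk's cited original concerns closed covers of compacta, while the open-cover, paracompact-space version you sketch is the one the paper needs; your proof, suitably completed, covers exactly that case.
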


We consider the open cover of our simplicial complex in which open sets are $\epsilon$-neighborhoods of the maximal simplices. For $\epsilon$ sufficiently small, such an open cover has contractible intersections, since they are homotopy equivalent to intersections of simplices. Hence the homotopy type of the hypergraph is equal to that of its nerve. The following proposition relates the nerve to the dual hypergraph.

\begin{proposition}
The nerve of a hypergraph is the simplicial complex of its dual hypergraph.
\end{proposition}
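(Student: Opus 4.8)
The plan is to show that the nerve $N(H)$ and the simplicial complex $\Delta(H^*)$ of the dual hypergraph are isomorphic as abstract simplicial complexes, via the tautological bijection sending a hyperedge $C \in \mathcal{C}$ (equivalently, a set of the open cover, hence a vertex of the nerve) to the dual vertex $v_C$ of $H^*$. First I would record the two sides explicitly. The nerve has one vertex per set in the open cover, i.e. one vertex per hyperedge $C$, and a finite collection $\{C_1, \dots, C_k\}$ spans a simplex exactly when the corresponding open sets have nonempty common intersection. On the other hand, $\Delta(H^*)$ has vertex set $\{v_C : C \in \mathcal{C}\}$, and a collection $\{v_{C_1}, \dots, v_{C_k}\}$ is a face precisely when it is contained in some hyperedge $e_u$ of $H^*$.

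Next I would reduce the nerve's incidence relation to a purely combinatorial one: for $\epsilon$ small enough, the $\epsilon$-neighborhoods of the simplices $C_1, \dots, C_k$ share a point if and only if the simplices themselves share a vertex, i.e. if and only if $\bigcap_{i=1}^{k} C_i \neq \emptyset$. Indeed, two geometric simplices meet in the face spanned by their common vertices, and a compactness argument over the (finite) complex upgrades this to the statement that the neighborhoods fail to meet once $\epsilon$ is below a uniform threshold. Granting this, the identification follows from duality. Recalling that $v_C \in e_u$ is equivalent to $u \in C$, I read off the chain of equivalences: $\{v_{C_1}, \dots, v_{C_k}\}$ is a face of $\Delta(H^*)$ iff some hyperedge $e_u$ of $H^*$ contains all the $v_{C_i}$, iff there is a vertex $u \in U$ with $u \in C_i$ for every $i$, iff $\bigcap_i C_i \neq \emptyset$, iff $\{C_1, \dots, C_k\}$ is a simplex of $N(H)$. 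Hence the bijection $C \leftrightarrow v_C$ carries simplices to simplices in both directions, which is the desired isomorphism.

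The part I expect to require the most care is matching the two vertex sets honestly. The Nerve Lemma setup in the preceding paragraph phrases the cover in terms of the maximal simplices, whereas $\Delta(H^*)$ carries one vertex for every hyperedge $C$, maximal or not; a non-maximal $C \subsetneq C'$ produces an extra dual vertex $v_C$ whose star is contained in that of $v_{C'}$. I would therefore take the nerve of the cover by $\epsilon$-neighborhoods of all the hyperedge-simplices, not only the maximal ones, which still has contractible intersections and so still computes the homotopy type of $|H|$ by the Nerve Lemma, and for which the vertex--hyperedge correspondence above is a genuine bijection. With that convention the two complexes coincide exactly, and the small geometric lemma on $\epsilon$-neighborhoods is the only non-formal ingredient.
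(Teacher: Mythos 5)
Your proof is correct and takes essentially the same route as the paper's: the entire content of the paper's proof is the combinatorial chain of equivalences you give, namely that $\{v_{C_1},\dots,v_{C_k}\}$ lies in a hyperedge $e_u$ of $H^*$ iff $u\in C_i$ for all $i$ iff $\bigcap_i C_i\neq\emptyset$, which is the nerve condition. Your two additional points of care --- the compactness argument reducing intersections of $\epsilon$-neighborhoods to intersections of vertex sets, and taking the cover by \emph{all} hyperedge-simplices rather than only the maximal ones so that the vertex sets genuinely match --- address subtleties the paper's proof passes over silently (it simply indexes the nerve by all of $\mathcal{C}$), and your resolution of the maximal-versus-all mismatch is the right one.
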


\begin{proof}
Consider a hypergraph $H$ with vertex set $U$ and hyperedge set $\mathcal{C}$. We now construct the dual hypergraph. The edges are represented by rows in the original incidence matrix. A subset $\{ C_j : j \in J\} \subseteq \mathcal{C}$ is connected by a hyperedge if there exists a vertex $u \in U$ that is in all hyperedges $C_j$ in the subset, or equivalently, if the intersection $\cap_{j \in J} C_j$ is non-empty. This is exactly the definition of the nerve.
\end{proof}

From this, the Nerve Lemma implies the following.

\begin{theorem} \label{cor:homotopy}
A tensor hypernetwork and its dual graphical model have the same homotopy type.
\end{theorem}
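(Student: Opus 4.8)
The plan is to chain together the two facts established immediately above---the Nerve Lemma applied to the $\epsilon$-neighborhood cover, and the Proposition identifying the nerve with the dual---so that the homotopy type is transported across the duality map with essentially no additional work.

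First I would fix notation. The tensor hypernetwork lives on a hypergraph $G = (V,E)$, and by Theorem~\ref{duality} its dual graphical model lives on the dual hypergraph $G^*$. By definition, the homotopy type of each side is the homotopy type of the simplicial complex whose maximal simplices are the maximal hyperedges, so it suffices to show that these two simplicial complexes are homotopy equivalent.

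Next I would apply the Nerve Lemma to $G$. Taking the open cover of the simplicial complex of $G$ by $\epsilon$-neighborhoods of its maximal simplices, for $\epsilon$ sufficiently small each intersection $\cap_{j \in J} V_j$ deformation-retracts onto the corresponding intersection of simplices and is therefore contractible. The Nerve Lemma then yields that $G$ and the nerve $N$ of this cover have the same homotopy type. The one genuinely geometric step is this verification of the contractibility hypothesis---which is the content of the paragraph preceding the Proposition---and everything surrounding it is formal.

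Finally I would invoke the Proposition that the nerve of a hypergraph is the simplicial complex of its dual hypergraph, giving that $N$ is precisely the simplicial complex of $G^*$. Composing the two identifications, $G$ and $G^*$ have the same homotopy type, and hence so do the tensor hypernetwork and its dual graphical model. I expect no real obstacle beyond the contractibility check for the Nerve Lemma, which the paper has already dispatched; the remainder of the argument is a direct composition of the two cited results, so I would keep the written proof to a single sentence observing that the claim follows from the Nerve Lemma together with the preceding Proposition.
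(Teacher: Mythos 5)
Your proposal is correct and is essentially identical to the paper's own argument: the paper likewise combines the Nerve Lemma (applied to the $\epsilon$-neighborhood cover of the maximal simplices, with the same contractibility verification) with the Proposition identifying the nerve with the simplicial complex of the dual hypergraph. No differences in substance, so a one-line proof citing those two results is exactly what the paper does.
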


A hypergraph cycle (see \cite[Chapter 5]{B}) is a sequence $(x_1, E_1, x_2, E_2, x_3, \ldots, x_k , E_k, x_1)$, where the $E_i$ are distinct hyperedges and the $x_j$ are distinct vertices, such that $\{ x_i, x_{i+1} \} \subseteq E_i$ for all $i = 1, \ldots, k-1$, and $\{ x_1, x_k \} \subseteq E_k$. A tree is a hypergraph with no cycles. The simplicial complexes corresponding to trees are contractible. Theorem~\ref{cor:homotopy} implies that trees are preserved under the duality correspondence.

\subsection{Marginalization and contraction}
Let $H = (U, \mathcal C)$ be a hypergraph and $H^*$ its dual. Let $P$ be a distribution in the graphical model on $H$ with clique potentials $\psi_C:\prod_{u\in C}[n_u] \to \mathbb K$.
The dual tensor hypernetwork has tensors $T_C = \psi_C\in\bigotimes_{u\in C}\mathbb K^{n_u}$ at the vertices of $H^*$.

\begin{proposition}[Marginalization Equals Contraction] \label{marg} Let $W\subseteq U$ be a subset of the vertices of the graph $H$. Then, the marginal distribution of $\{X_u\}_{u\in W}$ equals
$$ P(x_W) = \sum_{x_u\in[n_u]:\atop{u\not\in W}}\prod_{C\in \mathcal C}(T_C)_{\{x_C: u\in C\}},$$
which is the contracted tensor hypernetwork along the hyperedges corresponding to $W^c$.
\end{proposition}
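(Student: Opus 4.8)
The plan is to reduce the statement to Theorem~\ref{duality} together with a careful reading of what contraction along a hyperedge means. First I would start from the definition of the marginal: marginalizing out the variables $\{X_u\}_{u\notin W}$ means summing the joint distribution over all values $x_u\in[n_u]$ for $u\notin W$,
$$ P(x_W) = \sum_{\substack{x_u\in[n_u] \\ u\notin W}} P(x_U). $$
Next I would substitute the factorization supplied by Theorem~\ref{duality}: up to the global normalizing constant, the joint tensor factors as $P(x_U)\cdot Z = \prod_{C\in\mathcal C}(T_C)_{x_C}$ with $T_C=\psi_C$. Plugging this in and pulling the scalar out gives $P(x_W)\cdot Z = \sum_{x_u:u\notin W}\prod_{C}(T_C)_{x_C}$, which matches the stated right-hand side up to $Z$. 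I would note, as is done throughout the paper, that the identity is read up to this global scaling (equivalently, for the unnormalized distribution), so $Z$ plays no structural role.

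The key step, and really the only content beyond bookkeeping, is to recognize the inner sum as a partial contraction of the tensor hypernetwork on $H^*$. Here I would invoke the index correspondence established in the proof of Theorem~\ref{duality}: in the dual hypergraph $H^*$ the hyperedges are $\{e_u:u\in U\}$, the vertices are $\{v_C:C\in\mathcal C\}$, and $v_C\in e_u$ if and only if $u\in C$. The hyperedge $e_u$ carries the single index $x_u\in[n_u]$, and this index appears in precisely those tensors $T_C$ with $u\in C$. By definition, contracting $e_u$ sums the product of all tensors sharing that index over $x_u$. Hence contracting the hyperedges $\{e_u:u\in W^c\}$ executes exactly the nested summation $\sum_{x_u:u\notin W}\prod_C(T_C)_{x_C}$, leaving the free (dangling) indices $\{x_u:u\in W\}$; when some $e_u$ with $u\in W^c$ happens to be dangling in $H^*$ (i.e. $u$ lies in a unique clique), contracting it is simply summing the lone free index of that tensor, and the sum treats both cases uniformly.

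I expect no serious obstacle: the proposition is essentially a dictionary translation between two notations for the same arithmetic. The one point requiring care is the combinatorial matching in the final step, namely verifying via $u\in C\iff v_C\in e_u$ that summing over the variable $X_u$ on the graphical-model side coincides with contracting the dual hyperedge $e_u$, and correctly tracking which indices remain free after the partial contraction. Once that identification is in place, the chain of equalities closes immediately.
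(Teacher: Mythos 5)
Your proposal is correct and follows essentially the same route as the paper's proof: write the marginal as a sum over $x_u$, $u\notin W$, substitute the factorization $P(x)\propto\prod_{C}\psi_C(x_C)=\prod_C (T_C)_{x_C}$ from Theorem~\ref{duality}, and read the resulting sum as contraction along the dual hyperedges $\{e_u : u\in W^c\}$. Your explicit tracking of the normalizing constant $Z$ and of the correspondence $u\in C\iff v_C\in e_u$ is slightly more careful than the paper's one-line chain of equalities, but it is the same argument.
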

\begin{proof} The proof follows from the chain of equalities:
$$P(x_W) = \sum_{x_u\in[n_u]:\atop{u\not\in W}} P(x) = \sum_{x_u\in[n_u]:\atop{u\not\in W}}\prod_{C\in\mathcal C}\psi_C(x_C) = \sum_{x_u\in[n_u]:\atop{u\not\in W}}\prod_{C\in \mathcal C}(T_C)_{\{x_C: u\in C\}}.$$
In words, summing over the values of all variables in $W^c$ is the same as contracting the tensor hypernetwork along all hyperedges in $W^c$. 
\end{proof}

The interpretations of marginalization and contraction are also very similar in nature. The variables of a graphical model that are marginalized are often considered to be hidden, and the contracted edges of a tensor network represent entanglement (`unseen interaction').

The correspondence described in Proposition~\ref{marg} allows us to translate algorithms for marginalization in graphical models to algorithms for contraction in tensor networks, see Section \ref{application}. Without care to order indices, marginalization and contraction involve summing exponentially many terms. In many cases more efficient methods are possible.

\subsection{Conditional distributions}

Consider a probability distribution given by a fully-observed graphical model. Conditioning a variable $X_u$ to only take values in a given set $\mathcal Y_u\subseteq \mathcal X_u$ means restricting the probability tensor $P$ to the slice $\mathcal Y_u \times \prod_{b\in U\setminus\{u\}}\mathcal X_b$ which contains only the values $\mathcal Y_u$ for the variable $X_u$. This in turn corresponds to restricting each of the potentials for hyperedges $C$ containing $u$ to the given subset of elements $\mathcal Y_u\times \prod_{b\in C\setminus\{u\}}\mathcal X_b$. On the tensor networks side, we restrict the tensor corresponding to the given clique potential to the slice $\mathcal Y_u\times\prod_{b\in C\setminus\{u\}}\mathcal X_b$.

We wish to remark that the equivalence of conditioning and restriction to a slice of the probability tensor is due to the fact that the basis in which we view the probability tensor is fixed. The basis is given by the states of the the random variables: graphical models are not basis invariant. On the other hand, basis invariance is a key property of tensor networks that crops up in many applications, e.g. often a gauge (basis) is selected to make the computations efficient \cite{O}.

\subsection{Entanglement entropy and Shannon entropy}
Given a tensor network state represented by a tensor $T$, the {\em entanglement entropy}~\cite{O} equals $$-\trace(T\log T).$$ On the other hand, if $T$ represents the corresponding marginal distribution of the graphical model, the {\em Shannon entropy}~\cite{WJ} of $T$ is defined as
$$H(T) = - \sum_{\mathbf i\in\mathcal I}T_{\mathbf i}\log T_{\mathbf i},$$
where $\mathcal{I}$ indexes all entries of $T$. Expanding out the formula $-\trace(T\log T)$ shows that these two notions of entropy are the same. 

\section{Algorithms for marginalization and contraction} \label{application}

The {\em belief propagation} (or {\em sum-product}) algorithm is a dynamic programming method for computing marginals of a distribution~\cite{WJ}. The {\em junction tree algorithm} \cite{WJ} extends it to graphs with cycles. The equivalence between marginalization in graphical models and contraction in tensor hypernetworks was given in Proposition \ref{marg}. It means that we can use methods for marginalization to contract tensor hypernetworks and vice versa. For example, we can compute expectation values of tensor hypernetwork states~\cite{O} as well as contracted tensor hypernetwork states. In this section, we apply the junction tree algorithm to these tasks for the matrix product state (MPS) tensor networks from Example~\ref{mps}. We first recall the algorithm.

\subsection{The junction tree algorithm}

The input and output data of the junction tree algorithm are as follows.

\noindent{{\it Input:}} A graphical model defined by a hypergraph $H$ with clique potentials $\psi_C$.

\noindent{{\it Output:}} The marginals at each hyperedge, $P(x_C) = \sum_{x_u : u \notin C} P(x)$. 

We now recall how this algorithm works. First, we construct the graph $G$ associated to the hypergraph $H$ by adding edge $(i,j)$ whenever vertices $i$ and $j$ belong to the same hyperedge.  If $G$ is not chordal (or triangulated) we add edges until all cycles of length four or more have a chord, i.e. $G$ becomes chordal.
Then we can form the junction tree. This is a tree whose nodes are the maximal cliques of the graph. It has the {\em running intersection property}: the subset of cliques of $G$ containing a given vertex forms a connected subtree. Note that there are often multiple ways to construct a junction tree of a given graph $G$. 

To each maximal clique $C$ in $G$ we associate a clique potential which equals the product of the potentials of the hyperedges contained in $C$. If a hyperedge is contained in more than one maximal clique, its clique potential is assigned to one of them.
Each edge of the junction tree connects two cliques $C_1, C_2\in\mathcal C$ in $G$. We associate to such an edge the {\em separator} set $S = C_1\cap C_2$. We also assign a separator potential $\psi_S(x_S)$ to each $S$.  It is initialized to the constant value 1. A {\em basic message passing operation} from $C_1$ to a neighboring $C_2$ updates the potential functions at clique $C_2$ and separator $S = C_1\cap C_2$:
$$ \tilde \psi_S(x_S) \leftarrow \sum_{x_{C_1\setminus S}}\psi_{C_1}(x_{C_1}), $$
$$\hspace{1ex} \tilde\psi_{C_2}(x_{C_2}) \leftarrow \frac{\tilde \psi_S(x_S)}{\psi_{S}(x_S)}\psi_{C_2}(x_{C_2}).$$
The algorithm chooses a root of the junction tree, and orients all edges to point from the root outwards. It then applies basic message passing operations step-by-step from the root to the leaves until every node has received a message. Then the process is done in reverse, updating the clique and separator potentials from the leaves back to the root. After all messages have been passed, 
the final clique potentials
equal the marginals, $\tilde{\psi_C}  = \sum_{x_u \notin C}\prod_{B\in\mathcal C} \psi_{B}(x_B)$, and likewise for the final separator potentials.

\begin{remark} When the junction tree algorithm is used for probability distributions the clique potential functions are positive, but it works just as well for complex valued functions.
\end{remark}

The complexity of the junction tree algorithm is exponential in the {\em treewidth} of the graph, which is the size of the largest clique over all possible triangulations \cite[Chapter 2]{WJ}.

\subsection{Contracting a tensor network via duality}

To compute a tensor network state, we contract all edges in its tensor network that are not dangling. Our framework allows us to do this via duality, and to provably show the hardness of this computation since computing marginals on the graphical models side is widely studied \cite{WJ}.
The recipe is as follows. We consider the dual graphical model to the tensor hypernetwork. We make a new clique in the graphical model consisting of all vertices corresponding to the dangling edges of the tensor hypernetwork. The tensor hypernetwork state is the marginal distribution of that clique. We can then use, e.g., the junction tree algorithm to compute it.  

\subsection{Computing expectation values for matrix product states}

We now give the example of MPS tensor networks to illustrate how the junction tree algorithm translates to tensor hypernetworks. Using Theorem \ref{duality} we compute the family of graphical models that is dual to matrix product states. We show that the junction tree algorithm used to compute marginalizations of the dual graphical model corresponds to the {\em bubbling} algorithms that are used to compute expectation values of a MPS \cite{O}. In the figures, we draw the MPS with four observable indices, but repeating the pattern gives the results in the general case.

In quantum applications a tensor network state is denoted $| \psi \rangle$. Its expectation value is the inner product $\langle \psi | A | \psi \rangle$ for some operator $A$, which acts as a linear transformation in each vector space of observable indices (i.e. it is block diagonal).

Computing the expectation value of a MPS means contracting the tensor network on the left in Figure~\ref{MPS2}, where the middle row of vertices correspond to the blocks of $A$. Equivalently, it means marginalizing all variables of the graphical model on the right (or, computing the normalization constant of this graphical model). We contract the tensor network by applying the junction tree algorithm to the graphical model.

\begin{figure}[h]  
\begin{center}
\includegraphics[width=0.2\textwidth]{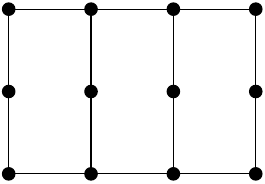} \hspace{1cm} \includegraphics[width=0.2\textwidth]{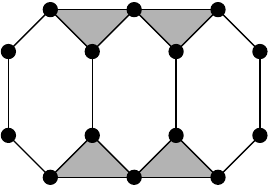}
\end{center}
\caption{The MPS tensor network on four states contracted with itself (left). Its dual graphical model (right).} \label{MPS2} 
\end{figure}

The first step of the algorithm is to triangulate the graph of the graphical model, by adding edges until it is chordal (or triangulated), see Figure~\ref{MPS3}. Next, we form a junction tree for the triangulated graph, see Figure~\ref{MPS4}.

\begin{figure}[h]
\begin{center}
\includegraphics[width=0.42\textwidth]{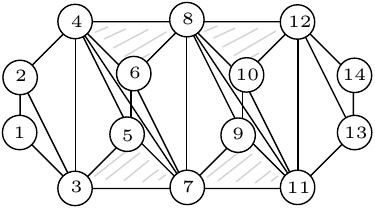}
\end{center}
\caption{A triangulation of the dual of a matrix product state.}
\label{MPS3}
\end{figure}

\begin{figure}[h]
\begin{center}
\includegraphics[width=\textwidth]{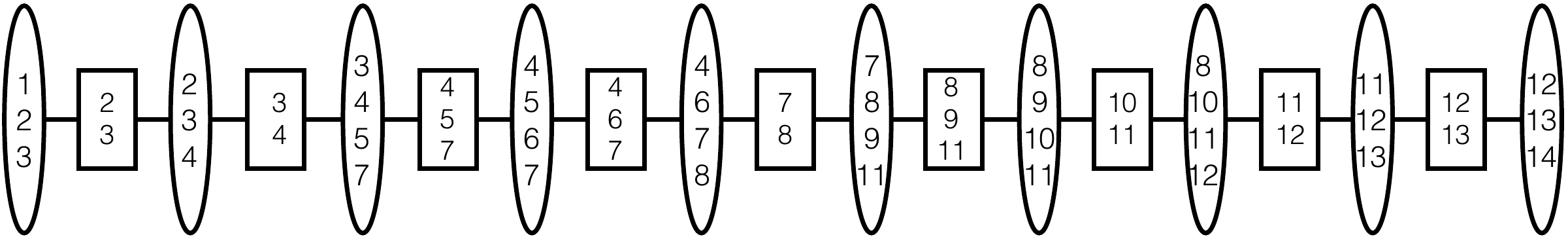}
\end{center}
\caption{The junction tree of the chordal graph in Figure~\ref{MPS3}. The cliques are in ovals; the separators are boxed.}
 \label{MPS4}
\end{figure}

We choose the root of the tree to be the left-most vertex in Figure~\ref{MPS4}. We do basic message passing operations from left to right until every vertex has received a message from its parent.
We arrive at the right-most clique $\{12, 13, 14\}$. If we complete the algorithm, by repeating this process from right to left, the final clique potentials at each vertex will equal the marginals. However, we can simplify the computation since our goal is just to compute the total sum. We terminate the message passing operations once we reach $\{12, 13, 14\}$. At that point we have the marginal at that clique, so we sum over the three vertices 12, 13, and 14 to get the total sum.

We now translate the junction tree algorithm to the language of tensor networks. The junction tree determines the order in which to contract the indices of the tensor network, see Figure~\ref{MPSContraction}. We contract edges in the tensor network until it is completely contracted.

At each step we sum over just one vertex of the dual graphical model (due to the structure of the junction tree in this case). This means means we contract one edge at a time from the tensor network.
In the first message passing operation we have $C_1 = \{ 1,2,3\}$, $C_2 = \{2,3,4\}$, $S=\{2,3\}$. We sum over the values of vertex $1$, since it is the only variable in $C_1 \backslash S$. This corresponds to contracting the tensor along the edge corresponding to vertex 1 of the graphical model (see step one of Figure~\ref{MPSContraction} for the corresponding tensor network operation). In the second message passing operation we sum over the values of vertex 2 of the graphical model. This corresponds to contracting the tensor network along the left edge (see the second step of Figure 4). The subsequent steps of the junction tree algorithm correspond to the steps shown in Figure 4.

\begin{figure}[h]
\begin{center}
\includegraphics[width = 0.99\textwidth]{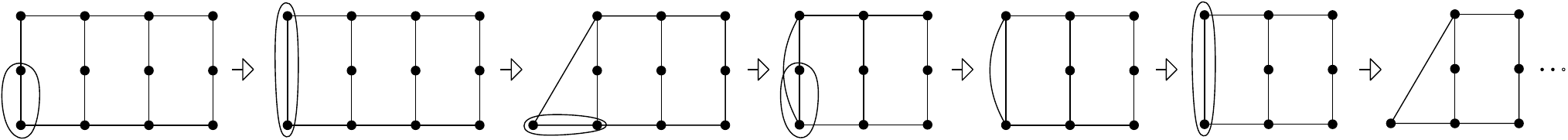}
\end{center}
\caption{Order of contraction in the MPS tensor network to compute its expectation value.}
\label{MPSContraction}
\end{figure}

It turns out that contracting the tensor in this way is what is usually done by the tensor networks community as well, a method sometimes called bubbling \cite{O}.
The triangulated graph of the dual graph of MPS has a treewidth of four, since we can continue the triangulation given in Figure~\ref{MPS3}. We can compute the complexity of the junction tree algorithm, and of the bubbling algorithm, to be $O(|V| (n r^3 + n^2 r^2 ))$ where $|V|$ is the number of vertices in the MPS, $n$ is the size of the dangling edges, and $r$ is the size of the entanglement edges.

\subsection{Extending to larger dimensions}

The higher-dimensional analogue of matrix product states/tensor trains is called the projected entangled pair states (PEPS), see Example~\ref{ising}. They are based on a two-dimensional lattice of entanglement interactions. Computing expectation values for the PEPS network takes exponential time in the number of states of the network \cite{O}. On the graphical models side, it is possible in principle to find expectation values of a PEPS state using the junction tree algorithm. Since the triangulated graph of the dual hypergraph of PEPS has a tree-width that grows in the size of the network, the junction tree algorithm is exponential time.

In \cite{MS}, the authors show that algorithms for computing expectation values are exponential in the treewidth of the tensor network. On the other hand, we have seen that the junction tree algorithm is exponential time in the treewidth of the dual graphical model. This indicates a similarity between the treewidth of a hypergraph and of its dual. A comparison of the treewidths of planar graphs and of their graph duals can be found in \cite{RS}.

To avoid exponential running times, numerical approximations are used. For graphical models, these are termed {\em loopy belief propagation} (see \cite[Chapter 4]{WJ} and references therein). A natural question is whether the algorithms for loopy belief propagation translate to known algorithms in the tensor networks community, e.g. for computing expectation values of PEPS, or whether they provide a new family of algorithms. In our opinion both answers to this question would be interesting.

\bigskip

{\bf Acknowledgements.} We would like to thank Jason Morton and Bernd Sturmfels for helpful discussions. Elina Robeva was funded by an NSF mathematical sciences postdoctoral research fellowship (DMS 1703821).

\begin{small}

\end{small}

\vspace{-.1cm}
\noindent \footnotesize {\bf Authors' addresses:}

\noindent 
Elina Robeva,  Massachusetts Institute of Technology, USA, 
{\tt erobeva@mit.edu}, \\
Anna Seigal,
University of California, Berkeley, USA,
{\tt seigal@berkeley.edu}.
\end{document}